\newtheorem{theorem}{Theorem}%
\newtheorem{corollary}{Corollary}%
\newtheorem{proposition}{Proposition}%
\title{Probabilistic Methods in the Study of Topological Indices on Random Spider Trees}
\author{Saylé C. Sigarreta, Saylí M. Sigarreta and Hugo Cruz-Suárez.}
\date{April 2022}
\begin{document}

\maketitle
\textbf{Abstract:} In this paper, we  characterize the structure and topological indices of  a class of random spider trees (RSTs)  such as degree-based Gini index, degree-based Hoover index, generalized Zagreb index and other indices associated with these.  We obtain the exact and asymptotic distributions of the number of leaves via probabilistic methods. Moreover, we relate this model to the class of RSTs that evolves in a preferential attachment manner.

\textbf{keywords:}Random trees, Spider trees, Gini index, Hoover index, Zagreb index, Topological indices.

\section{Introduction}\label{sec1}

Initiated in 1736 by Euler and developed in the 19th century by the englishmen A. Cayley and J.J. Silvester, Graph Theory has become a very powerful practical and theoretical tool. A graph $G$ is determined by two sets $(V,E)$, the set of nodes and edges. The edges and nodes are interpreted according to the problem to be modeled. Highlighting the trees as a very important and studied family of graphs, which from its origin has proven to have many applications in different areas. In mathematical chemistry, trees are used to characterize the molecular structure of chemical compounds; in this context the nodes represent the molecules and the edges the chemical bonds. Particularly, recent work on the spider trees includes the packing, labeling and extremal graphs problems. The packing problem has been well-studied in the literature within a wide range of variants. A special attention has been devoted to packing of trees into graphs. In this context, García et al. \cite{r1} conjectured that any two trees with $n$ vertices different from a star admit a tight
planar packing. In \cite{r1} the authors proved their conjecture for some restricted cases. Oda and Ota \cite{r2} proved it when one tree is a caterpillar or it is a spider of diameter four. Frati et al. \cite{r3} extended the last result to any spider. Finally, the conjecture was proved in \cite{r4}.  On the other hand, a graph labeling is an assignment of integers to the edges, nodes, or both, of a graph so that it meets to certain conditions. Based on the classification, there are edges labeling, node labeling, and total labeling. Graceful labeling of a tree with $n$ nodes is a labeling of its vertices with the numbers from $0$ to $n-1$, so that no two vertices share a label, labels of edges, being absolute difference of the labels of its end points, are also distinct. There is a long-standing conjecture named Graceful tree conjecture or Ringel-Kotzig-Rosa conjecture that says “All trees are graceful”. In particular Bahls, Lake, and Wertheim \cite{E1}
proved that spiders for which the
lengths of every path from the center to a leaf differ by at most one are graceful and Jampachon, Nakprasit, and Poomsa-ard \cite{S1} provide graceful labelings for some classes
of spiders.  Last but not least important, one relevant class of trees for chemical studies are the trees with a given number of pendants. A node is called pendant if it has degree 1. In \cite{r7} the authors proved that the trees with $n$ pendants ($n\geq 3$) that maximize the modified first Zagreb connection index must be spider trees or double stars. More recently, spider trees were adopted to model molecular structures of chemical compounds in mathematical chemistry. For instance, in \cite{0}, spider trees are used
to study hexagonal systems that model benzenoid molecules and unbranched catacondensed benzenoid molecules.\\

In the development of applications, it has become natural to conclude that random graphs are an appropriate and useful tool to analyze phenomena that evolve over time, since many important characteristics are difficult to capture using deterministic models. In this sense, it is important to mention that some  works  perform  studies of  topological  indices  on  random  networks and structures where they discuss, for instance, the application of specific topological indices as complexity measures for random networks. For a better treatment we refer interested readers to \cite{23,24,15,36,37,38,PP1}. In particular, motivated by the substantial increase of interests in random tree models Ren, Zhang and Dey investigated two classes of random lobster trees  which evolve according to different rules
and a class of random spider trees which grow in a preferential attachment manner \cite{2}. In this important work, the authors obtained very useful results, as a matter of fact, for the class of random spider trees they characterized
the structure of the model by determining the exact and asymptotic distributions of the number of leaves, and by computing two kinds of topological indices: Zagreb index and Gini index. Inspired by \cite{2} and considering the arguments put forward in the previous paragraphs, in this manuscript, we consider a class of spider trees that are incorporated with randomness, called random
spiders trees (RSTs), and we investigate several useful topological indices of this random class, including degree-based Gini index, degree-based Hoover index, generalized Zagreb index and other indices associated with these. Specifically, a central limit theorem is developed for the asymptotic distribution of the number of leaves of a RST.

\section{ Random Spider Trees}\label{sec2}

A spider tree is a connected tree with a centroid of degree at least 3. All the remaining nodes
are classified into two categories: internal nodes of degree 2 and leaves of degree 1, respectively. Thus, except for the centroid, all the nodes in a spider tree have degrees at most 2. The class of RSTs considered in this paper evolves in the following way: at time 1, a RST starts with a seed graph containing
a centroid  and three leaves. At each subsequent stage the leaves and centroid will be able to recruit new nodes (at time  $n$):\\
\begin{enumerate}
\item The centroid will be selected with probability $p$, $0 < p < 1$.

\item A leaf will be selected with probability $\displaystyle\frac{1-p}{L_{n}}$ where $L_{n}$ denotes the number of leaves in the  RST at time $n$, where  
\end{enumerate}
\medskip

\begin{center}
    $p+\displaystyle\sum_{i=1}^{L_{n}}{\frac{1-p}{L_{n}}}=p+\displaystyle\frac{(1-p)L_{n}}{L_{n}}=1.$
\end{center}
\medskip

\noindent
Note that only the centroid
and leaves are qualified for recruiting new nodes. If the centroid is selected, a
new leaf is attached to it; if a leaf is selected, a new leaf is attached to the (selected) leaf, and
the recruiter is converted to an internal node. Finally, we have that at each stage the generated graph is a spider tree with $n+3$ nodes.

\subsection{Leaves}\label{subsec2}

In the following, $L_{n}$ denotes the number of leaves in a RST at time $n$, with $n \geq 1$. By the construction of the model it follows that
\medskip

\begin{center}
    $L_{n}=3+\displaystyle\sum_{i=1}^{n-1}Ber(p)=3+Bin(n-1,p)$.
\end{center}
\medskip

\noindent
Consequently, the following statements are easily verified.

\begin{proposition}\label{p1} 
For $n \geq 1$ and $0 < p < 1$, the following statements hold:
\begin{enumerate}

\item $\mathbb{E}(L_{n})=3+(n-1)p$ and $\mathbb{V}(L_{n})=(n-1)(1-p)p$.

\item $M_{L_{n}}(t)=(1-p+pe^{t})^{n-1}e^{3t}$, $t \in \mathbb{R}$.
\item  For each $k \in \mathbb{R}$,~ $\displaystyle\frac{L_{n}-3-(n-1)p}{\sqrt{p(1-p)(n+k)}} \xrightarrow{D} N(0,1)$ as $n \rightarrow \infty$.
\end{enumerate}
\end{proposition}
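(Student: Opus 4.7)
The plan rests on the representation $L_n = 3 + \mathrm{Bin}(n-1,p)$ already announced in the excerpt, so before invoking it I would first justify it carefully. At every stage $i\ge 2$, exactly one of two mutually exclusive events happens: either the centroid is selected (probability $p$, whence a new leaf is attached and $L_{i}=L_{i-1}+1$), or some leaf is selected (total probability $(1-p)L_{i-1}/L_{i-1}=1-p$, whence the chosen leaf becomes internal and a new leaf replaces it, so $L_i=L_{i-1}-1+1=L_{i-1}$). Thus the increment $L_i-L_{i-1}$ is a Bernoulli$(p)$ random variable whose distribution does not depend on the past. This gives mutual independence of the increments, and since $L_1=3$, we obtain $L_n-3=\sum_{i=2}^{n}(L_i-L_{i-1})\sim\mathrm{Bin}(n-1,p)$.

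Given this representation, parts (1) and (2) are immediate. The mean and variance of a $\mathrm{Bin}(n-1,p)$ random variable are $(n-1)p$ and $(n-1)p(1-p)$; shifting by the constant $3$ yields $\mathbb{E}(L_n)=3+(n-1)p$ and leaves the variance unchanged. For the moment generating function I would use the factorization $M_{L_n}(t)=e^{3t}\,M_{\mathrm{Bin}(n-1,p)}(t)$ together with the standard formula $M_{\mathrm{Bin}(n-1,p)}(t)=(1-p+pe^{t})^{n-1}$, valid for all $t\in\mathbb{R}$.

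For part (3), I would start from the classical de Moivre--Laplace central limit theorem applied to $\mathrm{Bin}(n-1,p)$, giving
\begin{equation*}
Z_n:=\frac{L_n-3-(n-1)p}{\sqrt{(n-1)p(1-p)}}\xrightarrow{D}N(0,1)\quad\text{as }n\to\infty.
\end{equation*}
Writing the quantity of interest as
\begin{equation*}
\frac{L_n-3-(n-1)p}{\sqrt{p(1-p)(n+k)}}=Z_n\cdot\sqrt{\frac{n-1}{n+k}},
\end{equation*}
and noting that the deterministic factor $\sqrt{(n-1)/(n+k)}\to 1$ for every fixed $k\in\mathbb{R}$, an application of Slutsky's theorem finishes the proof.

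Honestly, none of the three steps presents a real obstacle; the statement is essentially a packaging of standard binomial facts. The subtlest point, and the one I would be most careful about, is the first one: verifying that the increments $L_i-L_{i-1}$ really are i.i.d.\ Bernoulli$(p)$ and not merely marginally Bernoulli$(p)$. This amounts to checking that the conditional probability $\mathbb{P}(L_i-L_{i-1}=1\mid L_{i-1},\dots,L_1)=p$ does not depend on the history, which holds precisely because the leaf-selection probabilities $(1-p)/L_{i-1}$ sum to the constant $1-p$ regardless of the current value of $L_{i-1}$.
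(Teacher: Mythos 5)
Your proof is correct and follows essentially the same route as the paper, which simply records the representation $L_{n}=3+\mathrm{Bin}(n-1,p)$ and declares the three claims "easily verified." Your added care in checking that the increments $L_{i}-L_{i-1}$ are genuinely i.i.d.\ Bernoulli$(p)$ (because the total leaf-selection probability is $1-p$ regardless of the history), together with the de Moivre--Laplace plus Slutsky argument for part (3), just makes explicit what the paper leaves implicit.
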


\subsection{A class of RSTs that evolves in a preferential attachment manner }\label{subsubsec2}
In a very recent article \cite{2}, the authors inspired by the seminal paper \cite{18} introduced a class of RSTs that evolves in a preferential attachment manner as follows. At time 1, a RST starts with a seed graph containing a centroid of degree 3 and three leaves. At each subsequent point, the probability of a qualified node recruiting a newcomer is proportional to its degree. If the centroid is selected, a new leaf is attached to it; if a leaf is selected, a new leaf is attached to the (selected) leaf, and the recruiter is converted to an internal node. Consequently, for $n\geq 2$ it is given by
$$
\mathbb{P}\left(\mathbf{1}_{v,n}\right)=\frac{deg_{v,n-1}}{\displaystyle\sum_{u \in Q_{n-1}}deg_{u,n-1}},
$$
where $v$ is a qualified node at time $n$, $\mathbf{1}_{v,n}$ indicates the event that node $v$ is chosen as recruiter at time $n$, $deg_{i,n-1}$ is the degree of a node $i$ at time $n-1$ and $Q_{n-1}$ denotes the set of qualified nodes at time $n-1$. Then, for $n \geq 2$, it follows that: \\ 

\begin{enumerate}
    \item The probability that the centroid recruits a newcomer at time $n$ is $\frac{L_{n-1}}{2L_{n-1}}=\frac{1}{2}$.\\
    
    \item The probability that a leave recruits a newcomer at time $n$ is $\frac{1}{2L_{n-1}}$.
\end{enumerate}
\medskip

\noindent
 Therefore, we can conclude that the class of RSTs that evolves in a preferential attachment manner (preferential model) is the model presented in Section \ref{sec2} with $p=\displaystyle\frac{1}{2}$.
\section{Topological Indices}
The purpose of topological indices is to study the structural properties associated with a graph and its invariants using a certain numerical value. The idea of capturing the information in numerical form is to be able to compare the graphs according to the property to be studied. Let $G=(V,E)$, many important topological indices ($TI(G)$) can be defined as
\begin{equation}\label{sa}
   TI(G)= \displaystyle\sum_{v\in V} h(deg_{v})^{\alpha},
\end{equation}
    
\noindent
where $\alpha \in \mathbb{R} $, $h:\{1,2, \dots \} \rightarrow (0, \infty)$ and $deg_{v}$ is the degree of a node $v$. In Section \ref{pa}, we will study the indices that satisfy (\ref{sa}) in the model introduced in Section \ref{sec2}. At each stage the generated tree has three types of nodes, centroid, leave and internal, for which their degrees are $L_{n}$, 1 and 2, respectively.

\begin{proposition}\label{p5}
Let $TI_{n}$ the value of the topological index at stage $n$.  For each $n \geq 1$, we have\\
\begin{center}
    $\mathbb{E}(TI_{n})=\mathbb{E}(h(L_{n})^{\alpha})+(h(1)^{\alpha}-h(2)^{\alpha})\mathbb{E}(L_{n})+h(2)^{\alpha}(n+2)$ and
\end{center}
\hfill

\medskip
\begin{center}
 $\mathbb{V}(TI_{n})=\mathbb{V}(h(L_{n})^{\alpha}+(h(1)^{\alpha}-h(2)^{\alpha})L_{n})$.   
\end{center}

\end{proposition}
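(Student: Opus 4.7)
The plan is to express $TI_n$ as an explicit (random) affine function of $L_n$ and $h(L_n)^{\alpha}$, after which both claims reduce to one-line applications of linearity of expectation and the invariance of variance under additive constants.

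First I would establish the node-count bookkeeping. At time $1$ the seed graph has $4$ nodes (one centroid of degree $3$ and three leaves), and at each subsequent step exactly one new node is appended, so at stage $n$ the tree has $n+3$ nodes. The recruitment rule preserves a clean structural invariant: whenever the centroid recruits, both its degree and the number of leaves increase by $1$; whenever a leaf recruits, the selected leaf becomes internal (degree $2$) and a new leaf is created, so the leaf count and the centroid's degree are both unchanged. Hence at every stage the centroid has degree exactly $L_n$, there are $L_n$ leaves (degree $1$), and the remaining nodes are internal (degree $2$), numbering $n+3-1-L_n = n+2-L_n$.

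Substituting these counts into the general form (\ref{sa}) yields
\begin{equation*}
TI_n \;=\; h(L_n)^{\alpha} \;+\; L_n\, h(1)^{\alpha} \;+\; (n+2-L_n)\, h(2)^{\alpha},
\end{equation*}
which I rewrite as
\begin{equation*}
TI_n \;=\; h(L_n)^{\alpha} \;+\; \bigl(h(1)^{\alpha}-h(2)^{\alpha}\bigr)\, L_n \;+\; h(2)^{\alpha}(n+2).
\end{equation*}
Taking expectation term by term gives the first formula immediately, since the last summand is deterministic. For the variance, the deterministic term drops out and one simply writes
\begin{equation*}
\mathbb{V}(TI_n) \;=\; \mathbb{V}\!\left( h(L_n)^{\alpha} + \bigl(h(1)^{\alpha}-h(2)^{\alpha}\bigr) L_n \right),
\end{equation*}
noting that the two summands inside are in general dependent (both are functions of $L_n$) so no further simplification is warranted at this level of generality.

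The only real obstacle is the first paragraph, namely justifying the structural identities $\deg_{\text{centroid},n} = L_n$ and $\#\{\text{internal nodes at stage }n\} = n+2-L_n$. Both follow from a short induction on $n$ using the two recruitment cases described above; once these are in hand, the proposition is a direct computation.
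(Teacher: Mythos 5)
Your proposal is correct and follows essentially the same route as the paper: both decompose $TI_n$ via the node counts into $h(L_n)^{\alpha}+(h(1)^{\alpha}-h(2)^{\alpha})L_n+h(2)^{\alpha}(n+2)$ and then read off the mean and variance directly. The only difference is that you spell out the inductive justification of the structural invariants (centroid degree equals $L_n$, internal nodes number $n+2-L_n$), which the paper simply takes as given from the model's construction.
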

\begin{proof}
Note that $I_{n}+L_{n}+1=n+3$, where $I_{n}$ is the number of internal nodes in the tree at stage $n$, it follows that:\\

\begin{center}
    $TI_{n}= h(L_{n})^{\alpha}+h(1)^{\alpha}L_{n}+h(2)^{\alpha}(n+2-L_{n})$
\end{center}

\begin{equation}\label{e2}
       ~~~~~~~~~~~=h(L_{n})^{\alpha}+(h(1)^{\alpha}-h(2)^{\alpha})L_{n}+h(2)^{\alpha}(n+2).
\end{equation}
\medskip

\noindent
By (\ref{e2}), we immediately get the mean and the variance of $TI_{n}$.
\end{proof}

\medskip
\noindent
As a consequence of Proposition \ref{p1} b the next result follows.
\begin{proposition}\label{p6}
If $h(L_{n})=aL_{n}+b$ with $a,b \in \mathbb{R}$ then $M_{h(L_{n})}(t)= (1-p+pe^{at})^{n-1}e^{(3a+b)t}$, $t \in \mathbb{R}$ and $n \geq 1$.

\end{proposition}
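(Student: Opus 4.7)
The plan is to reduce the claim to Proposition \ref{p1}(b) via the standard affine transformation rule for moment generating functions. Since $h(L_{n}) = aL_{n} + b$ is an affine function of the random variable $L_{n}$, I would first write the definition of the MGF and pull out the deterministic factor:
\begin{equation*}
M_{h(L_{n})}(t) = \mathbb{E}\bigl(e^{t(aL_{n}+b)}\bigr) = e^{bt}\,\mathbb{E}\bigl(e^{(at)L_{n}}\bigr) = e^{bt} M_{L_{n}}(at).
\end{equation*}

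Next, I would substitute $at$ into the formula $M_{L_{n}}(s) = (1-p+pe^{s})^{n-1} e^{3s}$ given in Proposition \ref{p1}(b), obtaining
\begin{equation*}
M_{h(L_{n})}(t) = e^{bt}\bigl(1 - p + pe^{at}\bigr)^{n-1} e^{3at} = \bigl(1 - p + pe^{at}\bigr)^{n-1} e^{(3a+b)t},
\end{equation*}
which is exactly the stated formula. The only thing to check is that $M_{L_{n}}$ is defined for all real arguments, which is immediate because $L_{n}$ takes finitely many values (it is $3 + \mathrm{Bin}(n-1,p)$), so the substitution $s = at$ is valid for every $t \in \mathbb{R}$ and every $a \in \mathbb{R}$.

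There is essentially no obstacle in this proof: it is a one-line consequence of the linearity of the exponent and the previously established MGF of $L_{n}$. The only mild subtlety is remembering that $M_{L_{n}}(at)$ rather than $aM_{L_{n}}(t)$ is what appears, but once this is written down correctly the result is immediate. Accordingly, the write-up can be kept very short, citing Proposition \ref{p1}(b) and noting the affine change of variable.
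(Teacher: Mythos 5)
Your proof is correct and matches the paper's intent exactly: the paper presents this proposition as an immediate consequence of Proposition \ref{p1}(b) via the standard affine transformation rule $M_{aL_{n}+b}(t)=e^{bt}M_{L_{n}}(at)$, which is precisely the computation you carry out. No discrepancies to report.
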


\subsection{Generalized Zagreb Index}\label{pa}

\noindent
At time $n \geq 1$, taking $h(x)=x$ and $\alpha \in \mathbb{R}-\{0\}$ in (\ref{sa}) we obtain the generalized Zagreb index ($Z^{g}_{n}$). According to (\ref{e2}),

\hfill

\begin{equation}\label{j}
     Z^{g}_{n}= L_{n}^{\alpha}+(1-2^{\alpha})L_{n}+2^{\alpha}(n+2).
\end{equation}

\medskip
\noindent

\begin{proposition}\label{t1}
Let $\alpha \in \{1,2,\dots\}$ and $t \in \mathbb{R}$, we have

\hfill
\begin{center}
    $\displaystyle\frac{d^{\alpha} M_{L_{n}}}{dt}(t)=\displaystyle\sum_{i=1}^{\alpha}C_{\alpha,i}\hspace{0.05cm}p^{i}e^{it}\displaystyle\frac{d^{i} M_{L_{n}}}{du}(u(t))$,
\end{center}
\medskip
\noindent
 with $C_{\alpha,\alpha}=C_{\alpha,1}=1$ and $C_{\alpha,i}=C_{\alpha-1,i-1}+iC_{\alpha-1,i}$ for $i\in \{2,3, \dots ,\alpha -1\}$. 
\end{proposition}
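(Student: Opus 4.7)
The plan is to prove Proposition~\ref{t1} by induction on $\alpha$, after recognizing the identity as an instance of Fa\`a di Bruno's formula. The recurrence $C_{\alpha,i}=C_{\alpha-1,i-1}+iC_{\alpha-1,i}$ with $C_{\alpha,1}=C_{\alpha,\alpha}=1$ is precisely the classical recursion satisfied by the Stirling numbers of the second kind, which are exactly the coefficients that arise when one composes a smooth function with an exponential. This strongly suggests viewing $M_{L_n}$ as a function of an auxiliary variable $u$ via the substitution $u(t)=pe^{t}$, so that $M_{L_n}(t)$ on the left-hand side is the composition of $M_{L_n}(\cdot)$ (regarded as a function of $u$) with $u(t)$.

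For the base case $\alpha=1$, the chain rule yields
\[
\frac{dM_{L_n}}{dt}(t)=\frac{dM_{L_n}}{du}(u(t))\cdot u'(t)=pe^{t}\frac{dM_{L_n}}{du}(u(t)),
\]
which matches the claim with $C_{1,1}=1$. The observation that drives the induction is that $u(t)=pe^{t}$ satisfies $u^{(k)}(t)=pe^{t}=u(t)$ for every $k\geq 1$, so differentiating any term of the form $C_{\alpha,i}\,p^{i}e^{it}\tfrac{d^{i}M_{L_n}}{du^{i}}(u(t))$ produces exactly two contributions: one from differentiating the prefactor $e^{it}$, and one from the chain rule applied to the derivative evaluated at $u(t)$.

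For the inductive step, assuming the identity at $\alpha$, I would differentiate both sides in $t$, apply the product rule to each summand, and then relabel the chain-rule contributions via $j=i+1$ to obtain
\[
\frac{d^{\alpha+1}M_{L_n}}{dt^{\alpha+1}}(t)=\sum_{i=1}^{\alpha}iC_{\alpha,i}\,p^{i}e^{it}\frac{d^{i}M_{L_n}}{du^{i}}(u(t))+\sum_{j=2}^{\alpha+1}C_{\alpha,j-1}\,p^{j}e^{jt}\frac{d^{j}M_{L_n}}{du^{j}}(u(t)).
\]
Collecting the coefficient of $p^{i}e^{it}\tfrac{d^{i}M_{L_n}}{du^{i}}(u(t))$ for each $i\in\{1,\dots,\alpha+1\}$ then gives $C_{\alpha+1,1}=C_{\alpha,1}=1$, $C_{\alpha+1,\alpha+1}=C_{\alpha,\alpha}=1$, and $C_{\alpha+1,i}=iC_{\alpha,i}+C_{\alpha,i-1}$ for the intermediate indices, which is exactly the stated recurrence with $\alpha$ replaced by $\alpha+1$.

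I do not anticipate a substantial obstacle: the argument is essentially careful bookkeeping. The one place requiring attention is the reindexing step, where one must verify that the boundary coefficients at $i=1$ and $i=\alpha+1$ each receive a contribution from exactly one of the two derivative sources (so that $C_{\alpha+1,1}$ and $C_{\alpha+1,\alpha+1}$ inherit the value $1$ without an extra term), and that the middle indices receive contributions from both, matching the three-part case split in the recurrence.
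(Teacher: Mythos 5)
Your proposal is correct and takes essentially the same route as the paper: induction on $\alpha$, differentiating each summand with the product and chain rules, reindexing the chain-rule contributions, and collecting coefficients to recover $C_{\alpha+1,1}=C_{\alpha+1,\alpha+1}=1$ and $C_{\alpha+1,i}=C_{\alpha,i-1}+iC_{\alpha,i}$. The only differences are cosmetic: the paper uses the substitution $u(t)=1-p+pe^{t}$ rather than $pe^{t}$ (both satisfy $u'(t)=pe^{t}$, which is all the argument needs), and it starts the induction at $\alpha=3$, whereas your base case $\alpha=1$ (together with the Stirling-number observation) actually covers the full stated range more cleanly.
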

\begin{proof}  
 We will get the proof via mathematical induction on $\alpha$. Firstly observe that, Proposition \ref{p1} b may be simplified by defining a new variable  $u(t)=1-p+pe^{t}$. Substituting $1-p+pe^{t}$ by $u(t)$ we get $M_{L_{n}}(t)=M_{L_{n}}(u(t))=u(t)^{n-1}(\frac{u(t)+p-1}{p})^{3}$. Thus, for the base, that is $\alpha =3$, we have
\begin{center}
  $\displaystyle\frac{d^{3} M_{L_{n}}}{dt}(t)=p^{3}e^{3t}\displaystyle\frac{d^{3} M_{L_{n}}}{du}(u(t))+3p^{2}e^{2t}\displaystyle\frac{d^{2} M_{L_{n}}}{du}(u(t))+pe^{t}\displaystyle\frac{d M_{L_{n}}}{du}(u(t))$,

\end{center}
with $C_{3,1}=C_{3,3}=1$ and $C_{3,2}=C_{2,1}+2C_{2,2}=3$. We assume that the statement holds for all $\alpha$, i.e.

\begin{equation}\label{s}
     \displaystyle\frac{d^{\alpha} M_{L_{n}}}{dt}(t)=\displaystyle\sum_{i=1}^{\alpha}C_{\alpha,i}\hspace{0.05cm}
p^{i}e^{it}\displaystyle\frac{d^{i} M_{L_{n}}}{du}(u(t)),
\end{equation}

\noindent
 with $C_{\alpha,\alpha}=C_{\alpha,1}=1$ and $C_{\alpha,i}=C_{\alpha-1,i-1}+iC_{\alpha-1,i}$ for $i\in \{2,3, \dots ,\alpha -1\}$. Note that for each $i\in \{1,2, \dots ,\alpha\}$ we arrive at

\medskip
\noindent
$\displaystyle\frac{d }{dt}\left( C_{\alpha,i}p^{i}e^{it}\displaystyle\frac{d^{i} M_{L_{n}}}{du}(u(t))\right)$

\begin{equation}\label{e}
    =i\hspace{0.05cm}C_{\alpha,i}\hspace{0.05cm}p^{i}e^{it}\displaystyle\frac{d^{i} M_{L_{n}}}{du}(u(t))+C_{\alpha,i}\hspace{0.05cm}p^{i+1}e^{(i+1)t}\displaystyle\frac{d^{i+1} M_{L_{n}}}{du}(u(t)).
\end{equation}

\noindent
By (\ref{s}) and (\ref{e}), we have proved the following result
\begin{equation}\label{es}
     \displaystyle\frac{d^{\alpha+1} M_{L_{n}}}{dt}(t)=\displaystyle\sum_{i=1}^{\alpha+1}C_{\alpha+1,i}\hspace{0.05cm}p^{i}e^{it}\displaystyle\frac{d^{i} M_{L_{n}}}{du}(u(t)),
\end{equation}
 
\noindent 
with $C_{\alpha+1,\alpha+1}=C_{\alpha+1,1}=1$ and $C_{\alpha+1,i}=C_{\alpha,i-1}+iC_{\alpha,i}$ for $i\in \{2,3, \dots ,\alpha\}$, which completes the proof.
\end{proof}

\noindent
A special case of Proposition \ref{t1} is the following result, which is valid when $t=0$ in (\ref{s}).
\begin{corollary} \label{c3}
For $\alpha \in \{1,2,\dots\}$, it is verified that \\
\begin{center}
    $\displaystyle\frac{d^{\alpha} M_{L_{n}}}{dt}(0)=\displaystyle\sum_{i=1}^{\alpha}C_{\alpha,i}\hspace{0.05cm}p^{i}\displaystyle\frac{d^{i} M_{L_{n}}}{du}(1)$,
\end{center}
with $C_{\alpha,\alpha}=C_{\alpha,1}=1$ and $C_{\alpha,i}=C_{\alpha-1,i-1}+iC_{\alpha-1,i}$ for $i\in \{2,3, \dots ,\alpha -1\}$.
\end{corollary}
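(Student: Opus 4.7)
The plan is to obtain Corollary \ref{c3} as a direct specialization of Proposition \ref{t1} by evaluating the identity at $t=0$. First I would record the two values needed for this substitution: since $u(t)=1-p+pe^{t}$, we have $u(0)=1-p+p=1$, and for every index $i$ the exponential factor $e^{it}$ becomes $1$ at $t=0$. No other term in the right-hand side of (\ref{s}) depends on $t$ outside of the factor $e^{it}$ and the argument $u(t)$ inside $d^{i}M_{L_{n}}/du$, so the substitution is clean.

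Next I would simply plug $t=0$ into the statement of Proposition \ref{t1}. The left-hand side becomes $d^{\alpha}M_{L_{n}}/dt\,(0)$, and each summand $C_{\alpha,i}\,p^{i}e^{it}\,d^{i}M_{L_{n}}/du\,(u(t))$ collapses to $C_{\alpha,i}\,p^{i}\,d^{i}M_{L_{n}}/du\,(1)$. The coefficients $C_{\alpha,i}$ satisfy exactly the recursion and boundary conditions asserted in the corollary, because they are inherited verbatim from Proposition \ref{t1}. This yields the claimed identity.

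Because the corollary is a pointwise evaluation of an already-established functional identity, I do not anticipate any genuine obstacle; the only thing to be careful about is confirming that $u(0)=1$ so that the argument of the inner derivatives is indeed the value stated in the corollary. In particular, no new induction is required, and the combinatorial recursion for $C_{\alpha,i}$ need not be re-verified here.
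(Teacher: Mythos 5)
Your proposal is correct and matches the paper's own reasoning exactly: the corollary is obtained by setting $t=0$ in the identity of Proposition \ref{t1}, using $u(0)=1-p+p=1$ and $e^{i\cdot 0}=1$, with the coefficients $C_{\alpha,i}$ carried over unchanged. No further comment is needed.
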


 \begin{theorem}\label{h}
For $n\geq 1$, $p \in (0,1)$ and  $\alpha \in \{1,2,\dots\}$ the following identity holds\\
\begin{center}
    $\displaystyle\frac{d^{\alpha} M_{L_{n}}}{dt}(0)= p^{\alpha}n^{\alpha}+\frac{\alpha}{2}(\alpha(1-p) -p+5)p^{\alpha-1}n^{\alpha-1}+O(n^{\alpha-2})$.
\end{center}
\end{theorem}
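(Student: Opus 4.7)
The plan is to combine Corollary \ref{c3} with an asymptotic expansion of $\frac{d^i M_{L_n}}{du^i}(1)$ in $n$, keeping terms through order $n^{\alpha-1}$. Recall from the proof of Proposition \ref{t1} that, under the substitution $u = 1-p+pe^t$, we have $M_{L_n}(u) = u^{n-1}\bigl(\frac{u+p-1}{p}\bigr)^3$. The right-hand factor is a cubic polynomial in $u$, and a direct computation shows that its $m$-th derivative at $u=1$ equals $1$, $3/p$, $6/p^2$, $6/p^3$ for $m = 0, 1, 2, 3$ and vanishes for $m \geq 4$.

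Applying Leibniz's rule to the product $u^{n-1}\cdot\bigl(\frac{u+p-1}{p}\bigr)^3$ yields
\begin{equation*}
\frac{d^i M_{L_n}}{du^i}(1) = \sum_{m=0}^{\min(3,i)} \binom{i}{m}\, D_m \prod_{j=1}^{i-m}(n-j),
\end{equation*}
where $D_m \in \{1,\, 3/p,\, 6/p^2,\, 6/p^3\}$ is the derivative listed above. Using $\prod_{j=1}^{\ell}(n-j) = n^\ell - \binom{\ell+1}{2} n^{\ell-1} + O(n^{\ell-2})$, the $m=0$ term gives $n^i - \binom{i+1}{2} n^{i-1}$ to the indicated order, the $m=1$ term contributes $\frac{3i}{p}\, n^{i-1}$, and the $m=2,3$ pieces fall into $O(n^{i-2})$. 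Combining,
\begin{equation*}
\frac{d^i M_{L_n}}{du^i}(1) = n^i + \Bigl(\frac{3i}{p} - \frac{i(i+1)}{2}\Bigr) n^{i-1} + O(n^{i-2}).
\end{equation*}

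Substituting this back into Corollary \ref{c3}, the only contribution to $n^\alpha$ comes from $i=\alpha$, producing $C_{\alpha,\alpha}\, p^\alpha n^\alpha = p^\alpha n^\alpha$, and contributions to $n^{\alpha-1}$ come only from $i \in \{\alpha-1,\alpha\}$; all smaller $i$ get absorbed into $O(n^{\alpha-2})$. I would evaluate $C_{\alpha,\alpha-1}$ from the stated recurrence $C_{\alpha,\alpha-1} = C_{\alpha-1,\alpha-2} + (\alpha-1)$ with base $C_{2,1}=1$, obtaining $C_{\alpha,\alpha-1} = \binom{\alpha}{2}$ by a short induction. Collecting and factoring the $n^{\alpha-1}$ coefficient then produces $\frac{\alpha}{2}(\alpha(1-p) - p + 5)\, p^{\alpha-1}$, as claimed.

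The main obstacle is pure bookkeeping: three distinct sources feed the coefficient of $n^{\alpha-1}$, namely the subleading term of the falling factorial at $i=\alpha$, the $m=1$ contribution at $i=\alpha$, and the leading falling factorial at $i=\alpha-1$. The compact closed form $\alpha(1-p) - p + 5$ emerges only after pulling out the common factor $\alpha p^{\alpha-1}/2$ and rewriting $-(\alpha+1)p + (\alpha-1) + 6$ in the stated shape; identifying $C_{\alpha,\alpha-1} = \binom{\alpha}{2}$ is what makes this cancellation possible.
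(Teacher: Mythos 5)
Your proposal is correct and follows essentially the same route as the paper: Corollary \ref{c3}, an order-$n^{i-1}$ expansion of $\frac{d^{i}M_{L_{n}}}{du^{i}}(1)$ (your Leibniz-rule computation on $u^{n-1}\bigl(\frac{u+p-1}{p}\bigr)^{3}$ reproduces exactly the paper's $n^{i}-\frac{i(ip+p-6)}{2p}n^{i-1}+O(n^{i-2})$, which the paper obtains by instead expanding the cubic into the four powers $u^{n+k}$), the identification $C_{\alpha,\alpha-1}=\frac{\alpha(\alpha-1)}{2}$ from the recurrence, and the same final collection of the three contributions to the $n^{\alpha-1}$ coefficient.
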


\begin{proof}
First, observe that \\
\begin{center}
    $M_{L_{n}}(u(t))=\frac{1}{p^{3}}(u(t)^{n+2}+3(p-1)u(t)^{n+1}+3(p-1)^{2}u(t)^{n}+(p-1)^{3}u(t)^{n-1}),$
\end{center}
\medskip
\noindent
$t \in \mathbb{R}$. Then for each $i \in \{1,2,\dots\}$, we obtain

\begin{center}
    $\displaystyle\frac{d^{i} M_{L_{n}}}{du}(u(0))=\displaystyle\sum _ {k=-1}^{2}\frac{b_{k}}{p^{3}}(n+k)(n+k-1) \ldots (n+k - (i-1))u(0)^{n+k-i}$,
\end{center}
\medskip
\noindent
with $b_{-1}=(p-1)^{3}$, $b_{0}=3(p-1)^{2}$, $b_{1}=3(p-1)$ and $b_{2}=1$.

\medskip
\noindent
Accordingly, it follows that

\hfill

\begin{center}
$\displaystyle\frac{d^{i} M_{L_{n}}}{du}(1)=\frac{1}{p^{3}}\displaystyle\sum _ {k=-1}^{2}b_{k}n^{i}+\frac{1}{2p^{3}}\displaystyle\sum _ {k=-1}^{2}b_{k}i(2k+1-i)n^{i-1}+ O(n^{i-2})$
\end{center}

$~~~~~~~~~~~~~~~~~~=n^{i}-\frac{i(ip+p-6)}{2p}n^{i-1}+ O(n^{i-2}). $

\medskip
\noindent
By Corollary \ref{c3}, it is concluded that for each $\alpha \in \{1,2,\dots\}$

\hfill

\begin{center}
    $\displaystyle\frac{d^{\alpha} M_{L_{n}}}{dt}(0)= p^{\alpha}n^{\alpha}-\frac{\alpha}{2p}(\alpha p+p-6)p^{\alpha}n^{\alpha-1}+C_{\alpha,\alpha -1}\hspace{0.05cm}p^{\alpha-1}n^{\alpha-1}+O(n^{\alpha-2})$,
\end{center}

\medskip
\noindent
as $C_{\alpha,\alpha-1}=C_{\alpha-1,\alpha-2}+(\alpha-1)C_{\alpha-1,\alpha-1}=C_{\alpha-1,\alpha-2}+\alpha-1=\frac{\alpha(\alpha-1)}{2}$. Then

\hfill

\begin{center}
    $\displaystyle\frac{d^{\alpha} M_{L_{n}}}{dt}(0)= p^{\alpha}n^{\alpha}+\frac{\alpha}{2}(\alpha(1-p) -p+5)p^{\alpha-1}n^{\alpha-1}+O(n^{\alpha-2})$.
\end{center}
\end{proof}

\medskip
\noindent
In consequence, by Theorem \ref{h}, the first two moments of $Z_{n}^{g}$ for $\alpha \in \{3, 4,\dots\}$ are given by:
 
\hfill
 
 \begin{center}
     $\mathbb{E}(Z^{g}_{n})=  p^{\alpha}n^{\alpha}+\frac{\alpha}{2}(\alpha(1-p) -p+5)p^{\alpha-1}n^{\alpha-1}+O(n^{\alpha-2})$ and
 \end{center}
 
 \hfill
  
 \begin{center}
     $\mathbb{E}((Z^{g}_{n})^{2})=  p^{2\alpha}n^{2\alpha}+\alpha(2\alpha(1-p) -p+5)p^{2\alpha-1}n^{2\alpha-1}+O(n^{2\alpha-2})$.
 \end{center}
 
\medskip
\noindent
Then,

\hfill

 \begin{center}
     $\mathbb{V}(Z^{g}_{n})=  \alpha^{2}(1-p) p^{2\alpha-1}n^{2\alpha-1}+O(n^{2\alpha-2})$.
 \end{center}

\begin{theorem}\label{s7}
 For any $\alpha \in \{3,4, \dots\}$, it is verified that $\frac{Z^{g}_{n}}{n^\alpha}\xrightarrow{P}p^{\alpha}$, when $n$ goes to infinity.
\end{theorem}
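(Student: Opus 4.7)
The plan is to derive convergence in probability directly from the first and second moment asymptotics established immediately before the theorem, via Chebyshev's inequality. All the hard analytic work (namely Theorem \ref{h} and its specialization to the mean and variance of $Z^{g}_{n}$) has already been carried out, so the argument reduces to book-keeping on orders of growth.

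First, I would rewrite the asymptotic mean as
\begin{equation*}
\mathbb{E}\!\left(\frac{Z^{g}_{n}}{n^{\alpha}}\right) = p^{\alpha} + \frac{\alpha}{2}(\alpha(1-p)-p+5)\,p^{\alpha-1}\,\frac{1}{n} + O\!\left(\frac{1}{n^{2}}\right),
\end{equation*}
so that $\mathbb{E}(Z^{g}_{n}/n^{\alpha}) \to p^{\alpha}$ as $n\to\infty$. Next, using the variance estimate $\mathbb{V}(Z^{g}_{n}) = \alpha^{2}(1-p)p^{2\alpha-1}n^{2\alpha-1}+O(n^{2\alpha-2})$ that was just recorded, I would deduce
\begin{equation*}
\mathbb{V}\!\left(\frac{Z^{g}_{n}}{n^{\alpha}}\right) = \frac{\alpha^{2}(1-p)p^{2\alpha-1}}{n} + O\!\left(\frac{1}{n^{2}}\right) \longrightarrow 0.
\end{equation*}

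Then for any $\varepsilon>0$, Chebyshev's inequality applied to the random variable $Z^{g}_{n}/n^{\alpha}$ (around its own mean) yields
\begin{equation*}
\mathbb{P}\!\left(\left|\frac{Z^{g}_{n}}{n^{\alpha}} - \mathbb{E}\!\left(\frac{Z^{g}_{n}}{n^{\alpha}}\right)\right| \geq \varepsilon/2 \right) \leq \frac{4}{\varepsilon^{2}}\,\mathbb{V}\!\left(\frac{Z^{g}_{n}}{n^{\alpha}}\right) \longrightarrow 0.
\end{equation*}
Combining this with the deterministic fact that $\mathbb{E}(Z^{g}_{n}/n^{\alpha})$ lies within $\varepsilon/2$ of $p^{\alpha}$ for all $n$ sufficiently large (by the mean expansion above), the triangle inequality gives $\mathbb{P}(|Z^{g}_{n}/n^{\alpha} - p^{\alpha}| \geq \varepsilon) \to 0$, which is the desired convergence in probability.

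There is essentially no obstacle here; the only subtlety worth flagging is making sure the remainder terms hidden in the $O(\cdot)$ notation in Theorem \ref{h} are uniform in the constants needed (the constant in front depends on $\alpha$ and $p$, but $\alpha$ and $p$ are fixed, so uniformity in $n$ is automatic). The argument genuinely needs $\alpha \geq 3$ only in the sense that this is the range in which the prior expansion of $\mathbb{E}(Z^{g}_{n})$ and $\mathbb{V}(Z^{g}_{n})$ was stated; the Chebyshev step itself is valid whenever those asymptotics hold.
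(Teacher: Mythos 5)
Your proposal is correct and follows essentially the same route as the paper: both apply Chebyshev's inequality using the variance estimate $\mathbb{V}(Z^{g}_{n})=\alpha^{2}(1-p)p^{2\alpha-1}n^{2\alpha-1}+O(n^{2\alpha-2})$ and then absorb the $O(n^{\alpha-1})$ discrepancy between $\mathbb{E}(Z^{g}_{n})/n^{\alpha}$ and $p^{\alpha}$ as a vanishing deterministic shift. The only cosmetic difference is that the paper centers an auxiliary variable $X_{n}$ at $p^{\alpha}$ by subtracting the lower-order mean terms up front, whereas you center at the mean and finish with a triangle-inequality step.
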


\begin{proof}
Let $X_{n}=\frac{1}{n^\alpha}(Z^{g}_{n}-\frac{\alpha}{2}(\alpha(1-p) -p+5)p^{\alpha-1}n^{\alpha-1}-O(n^{\alpha-2}))$ by Chebyshev’s inequality \cite{17} we get

\begin{center}
    $\mathbb{P}(|X_{n}-p^{\alpha}|\geq \epsilon) \leq \frac{1}{n^{2\alpha}\epsilon^{2}}(\alpha^{2}(1-p) p^{2\alpha-1}n^{2\alpha-1}+O(n^{2\alpha-2}))$,
\end{center}
for any $\epsilon >0$. If $n \rightarrow \infty$ then $\frac{1}{n^{2\alpha}\epsilon^{2}}(\alpha^{2}(1-p) p^{2\alpha-1}n^{2\alpha-1}+O(n^{2\alpha-2})) \rightarrow 0 $, so $X_{n} \xrightarrow{P} p^{\alpha}$. On the other hand, $X_{n}=\frac{Z^{g}_{n}}{n^\alpha}-x_{n}$ with  $x_{n}= \frac{1}{n^\alpha}(\frac{\alpha}{2}(\alpha(1-p) -p+5)p^{\alpha-1}n^{\alpha-1}+O(n^{\alpha-2}))$ then $x_{n} \rightarrow 0$ as $n\rightarrow \infty$. Therefore, the result follows.
\end{proof}
\begin{corollary}\label{f}
For any $\alpha \in \{3,4, \dots\}$ and $r > 0$, it is verified that $\frac{Z^{g}_{n}}{n^\alpha}\xrightarrow{L_{r}}p^{\alpha}$, when $n$ goes to infinity.
\end{corollary}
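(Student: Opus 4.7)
The plan is to upgrade the convergence in probability established in Theorem \ref{s7} to $L_r$ convergence by exploiting the fact that $Z^g_n/n^\alpha$ is deterministically bounded, so one can apply the bounded convergence theorem (equivalently, check uniform integrability trivially).

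First I would record the a priori bound on $L_n$. By construction, at stage $n$ the tree has $n+3$ nodes: one centroid, $L_n$ leaves and $I_n = n+2-L_n$ internal nodes, so
\begin{equation*}
3 \leq L_n \leq n+2.
\end{equation*}
Plugging this into the explicit formula \eqref{j}, namely $Z^g_n = L_n^\alpha + (1-2^\alpha)L_n + 2^\alpha(n+2)$, one gets a deterministic bound of the form $0 \leq Z^g_n \leq K_\alpha \, n^\alpha$ valid for all $n$ large enough, where $K_\alpha$ depends only on $\alpha$. Consequently, $Y_n := Z^g_n/n^\alpha$ is uniformly bounded by a constant $K_\alpha$, and hence so is $|Y_n - p^\alpha|^r$, say by some $M_\alpha > 0$.

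Next I would invoke Theorem \ref{s7}, which gives $Y_n \xrightarrow{P} p^\alpha$. By the continuous mapping theorem, $|Y_n - p^\alpha|^r \xrightarrow{P} 0$. Since this sequence is also bounded above by the deterministic constant $M_\alpha$, the bounded convergence theorem (applied to a subsequence-of-a-subsequence argument to pass from convergence in probability to convergence in $L^1$, which is standard) yields
\begin{equation*}
\mathbb{E}\bigl|Y_n - p^\alpha\bigr|^r \longrightarrow 0 \quad \text{as } n \to \infty,
\end{equation*}
which is exactly $Y_n \xrightarrow{L_r} p^\alpha$.

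The argument is essentially routine once the deterministic bound $L_n \leq n+2$ is noted; there is no real obstacle, and in particular no moment computation is needed beyond what Theorem \ref{s7} already provides. The only subtlety worth stating explicitly is the passage from convergence in probability to convergence in $L_r$ under a uniform bound, which is the standard consequence of uniform integrability of $\{|Y_n - p^\alpha|^r\}_{n\geq 1}$ (here trivially verified, since the sequence is uniformly bounded).
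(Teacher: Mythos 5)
Your proof is correct, and it reaches the conclusion by a slightly different (and arguably cleaner) route than the paper. Both arguments share the same skeleton: take the convergence in probability from Theorem \ref{s7} and upgrade it to $L_r$ convergence via uniform integrability. The difference lies in how uniform integrability of $\{(Z^g_n/n^\alpha)^r\}$ is certified. The paper invokes a moment criterion (Theorem 4.2 in Chapter 5 of \cite{17}, choosing an integer $N>r$), which implicitly requires knowing that $\sup_n \mathbb{E}\bigl[(Z^g_n/n^\alpha)^N\bigr]<\infty$; this is true, but it leans on the moment asymptotics of Theorem \ref{h} (or on the boundedness of $L_n$ anyway) and is not spelled out. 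You instead observe the deterministic bound $3\le L_n\le n+2$, so that $0\le Z^g_n\le K_\alpha n^\alpha$ and the sequence $Z^g_n/n^\alpha$ is uniformly bounded by a constant; uniform integrability is then trivial and the bounded convergence theorem (through the standard subsequence argument, or directly via $\mathbb{E}|Y_n-p^\alpha|^r\le \epsilon^r+M_\alpha\,\mathbb{P}(|Y_n-p^\alpha|>\epsilon)$) finishes the proof. What your version buys is self-containedness: no citation of a uniform-integrability criterion and no hidden moment estimates are needed, only the elementary observation that the index is deterministically of order $n^\alpha$. The paper's version generalizes more readily to settings where no almost-sure bound is available but higher moments are controlled.
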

\begin{proof}
 Using Theorem \ref{s7} we have $\frac{Z^{g}_{n}}{n^\alpha}\xrightarrow{P}p^{\alpha}$ when $n$ goes to infinity. On the other hand, $\frac{Z^{g}_{n}}{n^\alpha} \geq 0$ for all $n\geq 1$, then $|\frac{Z^{g}_{n}}{n^\alpha}|=\frac{Z^{g}_{n}}{n^\alpha}$. For each $r > 0$ there exists $N \in \{1,2, \dots \}$ such that $N > r$. By Theorem 4.2 in Chapter 5 of \cite{17} we obtain $\{\left(\frac{Z^{g}_{n}}{n^\alpha}\right)^{r},~ n \geq 1\}$ is uniformly integrable. Then, applying Theorem 5.4 in Chapter 5 of \cite{17} with $X_{n}=\frac{Z^{g}_{n}}{n^\alpha}$, we obtain the convergence in $r-$mean and the proof is completed. 
\end{proof}

\subsubsection{ Zagreb index}\label{sub3.1}

At time $n \geq 1$, taking $h(x)=x$ and $\alpha = 2$ in (\ref{sa}) we obtain the Zagreb index ($Z_{n}$). According (\ref{j}) we have 

\begin{equation}\label{e5}
     Z_{n}= L_{n}^{2}-3L_{n}+4(n+2).
\end{equation}
\medskip
\noindent  
We can obtain the moments of $Z_{n}$ by (\ref{e5}). Clearly, for $n\geq 1$
\medskip

\begin{center}
    $\mathbb{E}(Z_{n})= n^{2}p^{2}+(-3p^{2}+4p+4)n+2p^{2}-4p+8$ and

\hfill

$\mathbb{V}(Z_{n})= (-4p^{4}+4p^{3})n^{3}+(22p^{4}-40p^{3}+18p^{2})n^{2}+(-38p^{4}+92p^{3}-70p^{2}+16p)n+20p^{4}-56p^{3}+52p^{2}-16p$.
\end{center}

\begin{proposition}\label{kk}
For $p \in (0,1)$, when $n$ goes to infinity it is verified that
\begin{enumerate}
    \item For all $k \in \mathbb{R}$, $\displaystyle\frac{Z_{n}-n^{2}p^{2}}{2\sqrt{p^{3}(1-p)(n+k)^{3}}} \xrightarrow{D} N(0,1)$.
    \item For all $r >0$, $\frac{Z_{n}}{n^2}\xrightarrow{L_{r}}p^{2}$.
\end{enumerate}
\end{proposition}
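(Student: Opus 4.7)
The plan is to exploit the closed form (\ref{e5}), $Z_n = L_n^2 - 3L_n + 4(n+2)$, together with the distributional information about $L_n$ gathered in Proposition \ref{p1}. For part 1, I would expand around $np$ and write
\begin{equation*}
Z_n - n^2p^2 = (L_n-np)^2 + 2np\,(L_n-np) - 3L_n + 4n + 8 .
\end{equation*}
After dividing by $2\sqrt{p^3(1-p)(n+k)^3}$, which is of order $n^{3/2}$, I expect only the middle term to survive. Indeed, $(L_n-np)^2$ has mean $\Theta(n)$, so it is $o_P(n^{3/2})$, and likewise $-3L_n + 4n+8 = O_P(n)$ is $o_P(n^{3/2})$. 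For the middle term, a direct algebraic simplification gives
\begin{equation*}
\frac{2np(L_n-np)}{2\sqrt{p^3(1-p)(n+k)^3}} = \frac{n}{n+k}\cdot\frac{L_n-np}{\sqrt{p(1-p)(n+k)}},
\end{equation*}
and since $L_n - np = (L_n - 3 - (n-1)p) + (3-p)$, the bounded perturbation $3-p$ vanishes after normalization, so Proposition \ref{p1}(3) combined with Slutsky's theorem delivers the $N(0,1)$ limit for this piece. A final application of Slutsky's theorem assembles the three pieces into the claimed CLT.

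For part 2, I would mimic the strategy used in Corollary \ref{f}. The mean and variance of $Z_n$ are displayed explicitly just above the proposition; dividing by $n^2$ gives $\mathbb{E}(Z_n/n^2) = p^2 + O(1/n)$ and $\mathbb{V}(Z_n/n^2) = O(1/n)$, so Chebyshev's inequality immediately yields $Z_n/n^2 \xrightarrow{P} p^2$. To upgrade this to $r$-mean convergence, I would use the deterministic bound $3 \le L_n \le n+2$ coming from the construction, which gives $0 \le Z_n/n^2 \le (n+2)^2/n^2 + 4(n+2)/n^2$, a uniformly bounded quantity in $n$. Hence $\{(Z_n/n^2)^r : n \ge 1\}$ is a bounded and therefore uniformly integrable family for every $r > 0$, and invoking Theorem 5.4 of Chapter 5 in \cite{17} (exactly as in the proof of Corollary \ref{f}) promotes convergence in probability to convergence in $L_r$.

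The main obstacle is the quadratic remainder $(L_n-np)^2$ in the part 1 decomposition: one must check it is genuinely of lower order than the linear cross term, since naively one might worry that squaring a fluctuation of size $\sqrt{n}$ interacts badly with the $n^{3/2}$ normalization. The expected value computation $\mathbb{E}[(L_n-np)^2] = \mathbb{V}(L_n) + (3-p)^2 = \Theta(n)$ settles this, and after that step everything reduces to Slutsky bookkeeping plus the already-available CLT for $L_n$.
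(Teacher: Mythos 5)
Your argument is correct, but for part 1 it takes a genuinely different route from the paper. The paper completes the square in $L_{n}$, writing $Z_{n}=\left(L_{n}-\tfrac{3}{2}\right)^{2}+\tfrac{16n+23}{4}$, identifies the normalized square as (approximately) a non-central $\chi^{2}$ variable with one degree of freedom, invokes the normal approximation of the non-central chi-squared distribution from \cite{11}, and then shows by an $a_{n},b_{n}$ bookkeeping step that the resulting centering and scaling match the claimed ones. You instead expand $Z_{n}-n^{2}p^{2}=(L_{n}-np)^{2}+2np(L_{n}-np)-3L_{n}+4n+8$, kill the quadratic remainder and the linear terms by order-of-magnitude estimates (Markov for the $\Theta(n)$-mean nonnegative square, a deterministic $O(n)$ bound for the rest), and reduce the surviving cross term to Proposition \ref{p1}(3) via Slutsky. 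Your route is more elementary and arguably cleaner: it avoids the non-central chi-squared detour entirely, and in particular sidesteps the slightly delicate point in the paper's proof where an asymptotically normal quantity is treated as exactly normal so that its square is exactly non-central $\chi^{2}$. The paper's approach, on the other hand, packages the quadratic dependence on $L_{n}$ in one stroke and transfers the whole problem to a known limit theorem for $\chi^{2}$ families. For part 2 the paper only says the proof is analogous to Corollary \ref{f}, which rests on a moment-based uniform integrability criterion; your observation that $3\le L_{n}\le n+2$ makes $Z_{n}/n^{2}$ deterministically bounded gives uniform integrability for free and is, if anything, simpler than the cited criterion. Both proofs are sound; yours requires no external distributional approximation beyond the CLT for $L_{n}$ already established in Proposition \ref{p1}.
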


\begin{proof}
a) By (\ref{e5}) $Z_{n}=\left(L_{n}-\frac{3}{2}\right)^{2}+\frac{16n+23}{4}$ for each $n \geq 1$. Note that Proposition \ref{p1} c implies that $\displaystyle\frac{L_{n}-\frac{3}{2}}{\sqrt{p(1-p)(n+k)}}$ is equivalent to a normal random variable with mean $\displaystyle\frac{\frac{3}{2}+(n-1)p}{\sqrt{p(1-p)(n+k)}}$ and variance $1$ in distribution. Therefore, $\left( \displaystyle\frac{L_{n}-\frac{3}{2}}{\sqrt{p(1-p)(n+k)}}\right)^{2} \sim \chi^{2}
\left(\displaystyle\frac{\left(\frac{3}{2}+(n-1)p\right)^{2}}{p(1-p)(n+k)},1\right)$. Indicating us that:
\begin{center}
    $\displaystyle\frac{Z_{n}}{p(1-p)(n+k)}-\displaystyle\frac{16n+23}{4p(1-p)(n+k)} \sim \chi^{2}
\left(\displaystyle\frac{\left(\frac{3}{2}+(n-1)p\right)^{2}}{p(1-p)(n+k)},1\right)$.
\end{center}
\medskip
\noindent
By the well-known normal approximation
of non-central chi-squared distribution (see \cite{11}), it is obtained that
\begin{center}
    $\displaystyle\frac{\displaystyle\frac{Z_{n}}{p(1-p)(n+k)}-\displaystyle\frac{16n+23}{4p(1-p)(n+k)}-(1+\displaystyle\frac{\left(\frac{3}{2}+(n-1)p\right)^{2}}{p(1-p)(n+k)})}{\sqrt{2(1+\displaystyle\frac{2\left(\frac{3}{2}+(n-1)p\right)^{2}}{p(1-p)(n+k)})}} \xrightarrow{D} N(0,1)$,
\end{center}

\medskip
\noindent
as $n \rightarrow \infty$. In particular,
\begin{center}
$\displaystyle\frac{\displaystyle\frac{Z_{n}}{p(1-p)(n+k)}-\displaystyle\frac{16n+23}{4p(1-p)(n+k)}-1-\displaystyle\frac{\left(\frac{3}{2}+(n-1)p\right)^{2}}{p(1-p)(n+k)}}{\sqrt{2(1+\displaystyle\frac{2\left(\frac{3}{2}+(n-1)p\right)^{2}}{p(1-p)(n+k)})}}$
\end{center}
\begin{center}
$~~~~~~~~~~~=\displaystyle\frac{Z_{n}-\frac{16n+23}{4}-p(1-p)(n+k)-\left(\frac{3}{2}+(n-1)p\right)^{2}}{\sqrt{p(1-p)(n+k)}\sqrt{2(p(1-p)(n+k)+2\left(\frac{3}{2}+(n-1)p\right)^{2})}}$
\end{center}

\begin{center}
$=\displaystyle\frac{Z_{n}-\frac{16n+23}{4}-p(1-p)(n+k)-\left(\frac{3}{2}+(n-1)p\right)^{2}}{\sqrt{4p^{3}(1-p)(n+k)^{3}+o(n^{3})}}$
\end{center}
~~~~~~~~~~~~~~~~~~~~$=a_{n}\displaystyle\frac{Z_{n}-n^{2}p^{2}}{\sqrt{4p^{3}(1-p)(n+k)^{3}}}+b_{n}$,

\medskip
\noindent
where
\begin{center}
$a_{n}=\displaystyle\frac{\sqrt{4p^{3}(1-p)(n+k)^{3}}}{\sqrt{4p^{3}(1-p)(n+k)^{3}+o(n^{3})}}$,
\end{center}

\begin{center}
    $b_{n}=\displaystyle\frac{o(n^{\frac{3}{2}})}{\sqrt{4p^{3}(1-p)(n+k)^{3}+o(n^{3})}}$.
\end{center}
\medskip
\noindent
It is verified that $a_{n} \rightarrow 1$ and $b_{n} \rightarrow 0$ as $n \rightarrow \infty$, then

\begin{center}
$\displaystyle\frac{Z_{n}-n^{2}p^{2}}{2\sqrt{p^{3}(1-p)(n+k)^{3}}} \xrightarrow{D} N(0,1)$.
\end{center}
b) The proof can be done similarly to that of Corollary \ref{f}.
\end{proof}

\paragraph{Gordon-Scantlebury Index}

Defining $S_{n}$ as the Gordon-Scantlebury index at time $n \geq 1$, which verifies that $Z_{n}=2(S_{n}+E_{n})$  \cite{10} where $E_{n}$ is the number of edges at time $n$. The tree generated by the model at time $n$ has $n+3$ nodes then at time $n$ it has $n+2$ edges, thus $S_{n}=\displaystyle\frac{Z_{n}}{2}-n-2$. For $n\geq 1$, we get
\begin{center}
     $\mathbb{E}(S_{n})= \frac{n^{2}p^{2}}{2}+(-\frac{3}{2}p^{2}+2p+1)n+p^{2}-2p+2$ and
          
\end{center}

\hfill

\begin{center}
     $\mathbb{V}(S_{n})= (-p^{4}+p^{3})n^{3}+(\frac{11}{2}p^{4}-10p^{3}+\frac{9}{2}p^{2})n^{2}+(-\frac{19}{2}p^{4}+23p^{3}-\frac{35}{2}p^{2}+4p)n+5p^{4}-14p^{3}+13p^{2}-4p$.
     
\end{center}

\begin{proposition}\label{p8}
For $p \in (0,1)$, when $n$ goes to infinity it is verified that
\begin{enumerate}
    \item For all $k \in \mathbb{R}$, $\displaystyle\frac{S_{n}-\frac{n^{2}p^{2}}{2}}{\sqrt{p^{3}(1-p)(n+k)^{3}}} \xrightarrow{D} N(0,1)$.
    \item For all $r>0$, $\frac{S_{n}}{n^{2}}\xrightarrow{L_{r}}\frac{p^{2}}{2}.$
\end{enumerate}
\end{proposition}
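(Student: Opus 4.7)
The plan is to leverage the identity $S_n = \tfrac{Z_n}{2} - n - 2$, noted in the paragraph just before the statement, so that both parts reduce to the analogous results for $Z_n$ already established in Proposition \ref{kk}. In other words, $S_n$ is an affine function of $Z_n$ with deterministic (and subdominant) correction, so the asymptotic behaviour should transfer directly.

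For part (a), I would first rewrite $Z_n - n^2 p^2 = 2\bigl(S_n - \tfrac{n^2 p^2}{2}\bigr) + 2n + 4$ and divide through by $2\sqrt{p^3(1-p)(n+k)^3}$. This decomposes the standardised $Z_n$ into the standardised $S_n$ plus the remainder $\frac{n+2}{\sqrt{p^3(1-p)(n+k)^3}}$. Since the denominator grows like $n^{3/2}$ while the numerator is linear in $n$, the remainder tends to $0$. By Proposition \ref{kk}(a) the left-hand side converges in distribution to $N(0,1)$, so Slutsky's theorem delivers the desired convergence for $S_n$.

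For part (b), the simplest route is the same affine identity: $\tfrac{S_n}{n^2} = \tfrac{1}{2}\cdot\tfrac{Z_n}{n^2} - \tfrac{1}{n} - \tfrac{2}{n^2}$. The deterministic shift tends to $0$ in $L_r$ (in fact uniformly), and Proposition \ref{kk}(b) gives $\tfrac{Z_n}{n^2}\xrightarrow{L_r} p^2$, hence $\tfrac{1}{2}\cdot\tfrac{Z_n}{n^2}\xrightarrow{L_r} \tfrac{p^2}{2}$; the triangle inequality in $L_r$-norm then yields $\tfrac{S_n}{n^2}\xrightarrow{L_r} \tfrac{p^2}{2}$. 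Alternatively, one could replicate the argument of Corollary \ref{f} almost verbatim: Chebyshev's inequality with the variance formula displayed just above the proposition gives convergence in probability, and nonnegativity together with the polynomial bound on moments (each $\mathbb{E}[(S_n/n^2)^N]$ is uniformly bounded for any integer $N>r$) yields uniform integrability of $\{(S_n/n^2)^r\}$, upgrading the convergence to $L_r$.

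There is no real obstacle here; the content is entirely bookkeeping. The only point that requires any care is checking that the lower-order terms (the $-n-2$ shift in part (a) and the $O(1/n)$ shift in part (b)) truly are negligible against the scaling, which the explicit polynomial formulas for $\mathbb{E}(S_n)$ and $\mathbb{V}(S_n)$ confirm at a glance.
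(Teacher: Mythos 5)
Your proposal is correct and matches the paper's (implicit) route: the paper states Proposition \ref{p8} without proof, treating it as an immediate consequence of the affine relation $S_{n}=\frac{Z_{n}}{2}-n-2$ and Proposition \ref{kk}, which is exactly the reduction you carry out via Slutsky's theorem and the transfer of $L_{r}$-convergence. The only minor caveat is that for $0<r<1$ the triangle inequality you invoke should be replaced by the $c_{r}$-inequality, but this does not affect the argument.
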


\paragraph{Platt Index}
Let $P_{n}$ denote the Platt index at time $n \geq 1$, wich verifies that $P_{n}=2S_{n}$ \cite{10}. Thus we obtain that\\

\begin{center}
     $\mathbb{E}(P_{n})= n^{2}p^{2}+(-3p^{2}+4p+2)n+2p^{2}-4p+4$ and
\end{center}

\hfill

\begin{center}
     $\mathbb{V}(P_{n})=\mathbb{V}(Z_{n}) $.
     
\end{center}
\begin{proposition}
For $p \in (0,1)$, when $n$ goes to infinity it is verified that
\begin{enumerate}
    \item For all $k \in \mathbb{R}$, $\displaystyle\frac{P_{n}-n^{2}p^{2}}{2\sqrt{p^{3}(1-p)(n+k)^{3}}} \xrightarrow{D} N(0,1)$.
    \item For all $r >0$,  $\frac{P_{n}}{n^{2}} \xrightarrow{L_{r}} p^{2}.$
\end{enumerate}

\end{proposition}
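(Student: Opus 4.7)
The plan is to exploit the identity $P_n = 2S_n$ cited from \cite{10} and reduce both statements to Proposition \ref{p8}, which already provides a central limit theorem and an $L_r$ convergence for the Gordon-Scantlebury index $S_n$. Because the transformation from $S_n$ to $P_n$ is just multiplication by a constant, the factor of $2$ is absorbed cleanly into the denominator in part (a) and into the limiting constant in part (b), so no fresh probabilistic argument is needed.

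For part (a), I would substitute $P_n = 2S_n$ directly into the standardized quantity and simplify
$$\frac{P_{n}-n^{2}p^{2}}{2\sqrt{p^{3}(1-p)(n+k)^{3}}} = \frac{2S_{n}-n^{2}p^{2}}{2\sqrt{p^{3}(1-p)(n+k)^{3}}} = \frac{S_{n}-\frac{n^{2}p^{2}}{2}}{\sqrt{p^{3}(1-p)(n+k)^{3}}}.$$
The right-hand side is exactly the statistic shown to converge in distribution to $N(0,1)$ by Proposition \ref{p8}(a), so the same convergence holds for the left-hand side.

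For part (b), I would write $P_n/n^2 = 2\,(S_n/n^2)$. Since multiplication by a constant is continuous in $L_r$ (equivalently, $\|2X\|_r = 2\|X\|_r$), convergence in $L_r$ is preserved, and Proposition \ref{p8}(b) yields $P_n/n^2 \xrightarrow{L_r} 2\cdot(p^2/2) = p^2$.

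There is no real obstacle here; the proposition is a clean corollary of its Gordon-Scantlebury counterpart. The only thing to verify is the algebraic simplification of the standardization in part (a) and the trivial fact that scalar multiples preserve convergence in $L_r$-mean.
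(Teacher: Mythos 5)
Your proposal is correct and follows exactly the route the paper intends: the paper states this proposition without proof, relying implicitly on the exact linear relation $P_{n}=2S_{n}$ and on Proposition \ref{p8}, which is precisely the reduction you carry out. The algebraic simplification in part (a) and the observation that scalar multiples preserve $L_{r}$-convergence are both correct, so nothing is missing.
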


\subsubsection{Forgotten Index}

At time $n \geq 1$, taking $h(x)=x$ and $\alpha = 3$ in (\ref{sa}) we obtain the Forgotten Index ($F_{n}$). According (\ref{j}) we have \\

\begin{center}
     $F_{n}= L_{n}^{3}-7L_{n}+8(n+2)$.
\end{center}

\medskip
\noindent
Our next task is to calculate the first moment of $F_{n}$, and consequently to get the variance of $F_{n}$, 

\hfill
\begin{center}
     $\mathbb{E}(F_{n})= n^{3}p^{3}+(12p^{2}-6p^{3})n^{2}+(11p^{3}-36p^{2}+30p+8)n-6p^{3}+24p^{2}-30p+22$
\end{center}

\medskip
\noindent
and

\hfill

\begin{center}
     $\mathbb{V}(F_{n})=9p^{5}(1-p)n^{5}-9p^{4}(1-p)(13p-18)n^{4}+3p^{3}(1-p)(197p^{2}-490p+302)n^{3}-9p^{2}(1-p)(159p^{3}-530p^{2}+572p-192)n^{2}-6p(1-p)^{2}(272p^{3}-803p^{2}+714p-150)n+36p(1-p)^{2}(19p^3-64p^2+71p-25)$.
     
\end{center}
\medskip
\noindent
According to Corollary \ref{f}, we obtain the following result.
\begin{corollary}
For all $r >0$ it is verified that, $\frac{F_{n}}{n^{3}}\xrightarrow{L_{r}}p^{3}$  as $n \rightarrow \infty$.
\end{corollary}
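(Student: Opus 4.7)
The plan is very short, since the corollary is an immediate specialization of a result already proved. First, I would observe that the Forgotten Index is precisely the generalized Zagreb index $Z^{g}_{n}$ with $\alpha=3$. The paragraph introducing $F_{n}$ derives $F_{n}=L_{n}^{3}-7L_{n}+8(n+2)$ by taking $h(x)=x$ and $\alpha=3$ in (\ref{sa}), which is exactly the expression one obtains by substituting $\alpha=3$ into the general formula (\ref{j}), using $1-2^{3}=-7$ and $2^{3}=8$. Hence $F_{n}=Z^{g}_{n}\big|_{\alpha=3}$ as random variables, not merely in distribution or to leading order.

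Second, since $3\in\{3,4,\dots\}$, Corollary \ref{f} applies with $\alpha=3$ and yields $\frac{Z^{g}_{n}}{n^{3}}\xrightarrow{L_{r}}p^{3}$ for every $r>0$, which is exactly the desired conclusion $\frac{F_{n}}{n^{3}}\xrightarrow{L_{r}}p^{3}$. No further step is required.

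The only ``obstacle'' is the bookkeeping verification of the identification $F_{n}=Z^{g}_{n}\big|_{\alpha=3}$, which is one line from (\ref{j}) and the definition of $F_{n}$. All the substantive analytic work, namely the moment asymptotics from Theorem \ref{h}, the Chebyshev estimate used in Theorem \ref{s7}, and the uniform integrability argument invoked in the proof of Corollary \ref{f} via Theorems 4.2 and 5.4 in Chapter 5 of \cite{17}, has already been executed for generic $\alpha\in\{3,4,\dots\}$, so nothing new needs to be proved for the $\alpha=3$ instance.
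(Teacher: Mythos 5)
Your proposal is correct and matches the paper's own argument exactly: the paper likewise introduces $F_{n}$ as the $\alpha=3$ case of (\ref{j}) and then states that the corollary follows directly from Corollary \ref{f}. Nothing further is needed.
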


\subsection{Degree-based Gini Index} \label{hu}
Recently, a degree-based Gini index for general graphs was proposed by \cite{12}.  This index is a topological measure on a graph capturing the
proximity to regular graphs. In \cite{2} the authors considered the degree-based Gini index
introduced by \cite{12}, with slight modifications. In this section, we study the degree-based Gini index defined in \cite{2}. By definition, the degree-based Gini index of a graph  within the class of RSTs  at time $n \geq 1$ is given by \\
\begin{center}
    $G_{n}=\displaystyle\frac{\displaystyle\sum_{i,j \in V_{n}}|deg_{i}-deg_{i}|}{(n+3)^{2}\mathbb{E}(deg_{v^{*}})},$
\end{center}

\medskip
\noindent
where $v^{*}$ is an arbitrary node of a randomly selected graph
from class of RSTs and $V_{n}$ denotes the node set at time $n$. We take $\mathbb{E}(G_{n})$ as the degree-based Gini index of the class. Due to the characteristics of the model, we get

\hfill

\begin{center}
    $~~\displaystyle\sum_{i,j \in V_{n}}|deg_{i}-deg_{i}|=|L_{n}-1|L_{n}+|L_{n}-2|(n+2-L_{n})+L_{n}(n+2-L_{n})$
\end{center}
$~~~~~~~~~~~~~~~~~~~~~~~~~~~=-L_{n}^{2}+(2n+5)L_{n}-2n-4,$

\medskip
\noindent
since $L_{n} \geq 3$ for all $n \geq 1$. Finally,

\hfill

\begin{center}
    $\mathbb{E}(d_{v^{*}})=\frac{2(n+2)}{n+3}$,
\end{center}
\medskip
\noindent
thus,

\begin{equation}\label{ilo}
     G_{n}=\frac{-L_{n}^{2}+(2n+5)L_{n}-2n-4}{2(n+3)(n+2)}.
\end{equation}

\medskip
\noindent
It follows that

\hfill

\begin{center}
     $\mathbb{E}(G_{n})=\frac{(2p-p^{2})n^{2}+(3p^{2}-4p+4)n-2p^{2}+2p+2}{2(n+3)(n+2)}$.
\end{center}
\medskip
\noindent
Next, we get an asymptotic property of the degree-based Gini index of the class of RSTs at time $n$.
\hfill

\begin{proposition}\label{p20}
As $n \rightarrow \infty$, we have $\mathbb{E}(G_{n}) \rightarrow \frac{p(2-p)}{2}$.
\end{proposition}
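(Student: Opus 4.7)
The plan is to exploit the explicit closed-form expression for $\mathbb{E}(G_n)$ displayed immediately before the proposition statement, which reduces the claim to a routine limit of a rational function in $n$.

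First, I would re-derive (or verify) that closed form by applying linearity of expectation to equation (\ref{ilo}). The numerator of $G_n$ depends only on $L_n$ and $L_n^2$, and both $\mathbb{E}(L_n) = 3+(n-1)p$ and $\mathbb{E}(L_n^2) = \mathbb{V}(L_n)+\mathbb{E}(L_n)^2 = (n-1)p(1-p)+(3+(n-1)p)^2$ are provided by Proposition \ref{p1}(a). Collecting terms, one recovers the displayed numerator $(2p-p^2)n^2+(3p^2-4p+4)n-2p^2+2p+2$ and the denominator $2(n+3)(n+2) = 2n^2 + 10n + 12$.

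Second, I would observe that both numerator and denominator are polynomials of degree exactly $2$ in $n$, with leading coefficients $2p-p^2$ and $2$ respectively. Dividing both by $n^2$ and letting $n \to \infty$, the $1/n$ and $1/n^2$ terms vanish, yielding
\[
\lim_{n\to\infty}\mathbb{E}(G_n) \;=\; \frac{2p-p^2}{2} \;=\; \frac{p(2-p)}{2},
\]
which is the desired conclusion.

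There is essentially no obstacle in this argument; the main content has already been absorbed into the derivation of the closed form for $\mathbb{E}(G_n)$, and what remains is a standard limit of a rational function whose numerator and denominator have the same degree. The only care required is bookkeeping in the algebraic expansion of $\mathbb{E}(L_n^2)$, but that is purely mechanical.
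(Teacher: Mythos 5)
Your proposal is correct and matches the paper's (implicit) argument: the paper states Proposition \ref{p20} as an immediate consequence of the displayed closed form for $\mathbb{E}(G_n)$, which is obtained exactly as you describe from equation (\ref{ilo}) and the moments in Proposition \ref{p1}, followed by the same degree-two-over-degree-two limit. Nothing is missing.
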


\medskip
\noindent
We see from (\ref{ilo}) that

\begin{center}
    $G_{n}=\frac{-\left(L_{n}-\frac{2n+5}{2}\right)^{2}+\left(\frac{2n+5}{2}\right)^{2}-2n-4}{2(n+3)(n+2)}.$
\end{center}

\medskip
\noindent
Therefore,
\medskip
\begin{center}
    $\mathbb{V}(G_{n})=\frac{4p(1-p)^{3}n^{3}+2p(11p-6)(1-p)^{2}n^{2}+2p(1-p)(19p^{2}-23p+6)n-4p(1-p)(5p^{2}-5p+1)}{4(n+3)^{2}(n+2)^{2}}.$
\end{center}

\begin{theorem}\label{y}
It is verified that $G_{n}\xrightarrow{P} \frac{p(2-p)}{2}$, as $n \rightarrow \infty$.
\end{theorem}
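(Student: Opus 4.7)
The plan is to apply Chebyshev's inequality combined with Proposition~\ref{p20}, following the same template used in the proof of Theorem~\ref{s7}. The key observation is that the expected value of $G_n$ already converges to the right limit, and the variance decays fast enough to make the fluctuations negligible.

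First I would inspect the variance formula stated just before the theorem. The numerator is a polynomial in $n$ of degree $3$, while the denominator $4(n+3)^{2}(n+2)^{2}$ is of degree $4$, so $\mathbb{V}(G_{n}) = O(1/n)$ and in particular $\mathbb{V}(G_{n}) \to 0$ as $n \to \infty$. This is the quantitative input that powers the concentration argument.

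Next, for any fixed $\epsilon > 0$, Chebyshev's inequality gives
\begin{center}
$\mathbb{P}(|G_{n} - \mathbb{E}(G_{n})| \geq \epsilon/2) \leq \dfrac{4\,\mathbb{V}(G_{n})}{\epsilon^{2}} \to 0,$
\end{center}
so $G_{n} - \mathbb{E}(G_{n}) \xrightarrow{P} 0$. Proposition~\ref{p20} supplies the deterministic convergence $\mathbb{E}(G_{n}) \to \frac{p(2-p)}{2}$, so for $n$ large enough $|\mathbb{E}(G_{n}) - \frac{p(2-p)}{2}| < \epsilon/2$. Combining these two estimates through the triangle inequality
\begin{center}
$\left|G_{n} - \dfrac{p(2-p)}{2}\right| \leq |G_{n} - \mathbb{E}(G_{n})| + \left|\mathbb{E}(G_{n}) - \dfrac{p(2-p)}{2}\right|$
\end{center}
yields $\mathbb{P}\bigl(|G_{n} - \frac{p(2-p)}{2}| \geq \epsilon\bigr) \to 0$, which is the desired convergence in probability.

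I do not anticipate any substantive obstacle: the argument is essentially mechanical once the variance order is read off from the explicit formula. The only care needed is the bookkeeping in splitting $\epsilon$ across the random fluctuation and the deterministic bias, and verifying that the leading-order cancellation making $\mathbb{V}(G_{n})$ drop to $O(1/n)$ rather than $O(1)$ is consistent with the numerator degree (which is inherited from the cubic variance of $L_{n}^{2}$ after the division by $(n+3)^{2}(n+2)^{2}$).
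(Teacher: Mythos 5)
Your argument is correct and is essentially identical to the paper's own proof: both apply Chebyshev's inequality to the explicit variance formula (which is $O(1/n)$) to get $G_{n}-\mathbb{E}(G_{n})\xrightarrow{P}0$, and then invoke Proposition~\ref{p20} to replace $\mathbb{E}(G_{n})$ by its limit $\frac{p(2-p)}{2}$. The only difference is that you spell out the $\epsilon/2$ splitting and triangle inequality explicitly, which the paper leaves implicit.
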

\begin{proof}
By Chebyshev's inequality \cite{17} we have 

\begin{center}
    $\mathbb{P}(|G_{n}-\mathbb{E}(G_{n})|\geq \epsilon) \leq \frac{\mathbb{V}(G_{n})}{\epsilon^{2}}$,
\end{center}
for any $\epsilon > 0$. If $n \rightarrow \infty$ then $\frac{\mathbb{V}(G_{n})}{\epsilon^{2}} \rightarrow 0$, so $G_{n}-\mathbb{E}(G_{n}) \xrightarrow{P} 0$. Therefore, Proposition \ref{p20} completes the proof. 
\end{proof}
\begin{corollary}\label{IQ}
For all $r >0$, we have $G_{n}\xrightarrow{L_{r}} \frac{p(2-p)}{2}$, when $n$ goes to infinity.
\end{corollary}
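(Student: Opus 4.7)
The plan is to upgrade the convergence in probability from Theorem \ref{y} to convergence in $r$-mean via a uniform integrability argument, exactly as in the proof of Corollary \ref{f}. The decisive feature here is that $G_{n}$ is \emph{deterministically} bounded, so uniform integrability of $\{|G_n|^r : n \geq 1\}$ is essentially automatic and no delicate moment calculation is required.

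First, I would read off from (\ref{ilo}) that the numerator $-L_n^{2} + (2n+5)L_n - 2n - 4$ is a downward-opening parabola in $L_n$ whose roots solve $x^{2} - (2n+5)x + (2n+4) = 0$, namely $L_n = 1$ and $L_n = 2n+4$. Since the tree at time $n$ has $n+3$ nodes with exactly one centroid, the number of leaves deterministically satisfies $3 \leq L_n \leq n+2$, which lies strictly inside $(1,\,2n+4)$. Hence the numerator is nonnegative and is bounded above by its value at the vertex $L_n = (2n+5)/2$, namely $(2n+3)^{2}/4$. Dividing by $2(n+3)(n+2)$ gives the uniform estimate
\[
0 \;\leq\; G_n \;\leq\; \frac{(2n+3)^{2}}{8(n+3)(n+2)} \;\leq\; C
\]
for some absolute constant $C$ valid for every $n \geq 1$.

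Second, this uniform bound yields $|G_n|^{r} \leq C^{r}$ pointwise for any fixed $r > 0$, so the family $\{|G_n|^{r} : n \geq 1\}$ is trivially uniformly integrable (it is dominated by a constant). Combining this with $G_n \xrightarrow{P} p(2-p)/2$ from Theorem \ref{y} and invoking Theorem 5.4 in Chapter 5 of \cite{17}, exactly as in the closing step of the proof of Corollary \ref{f}, delivers the asserted $L_{r}$ convergence. The only genuine step is the \emph{a priori} bound on $G_n$; beyond that, the argument is the standard passage from convergence in probability to $L_{r}$ convergence under uniform integrability, and there is no subtle probabilistic obstacle since the limit is a deterministic constant and $G_n$ is already bounded before any expectation is taken.
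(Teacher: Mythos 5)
Your proof is correct and follows the same architecture as the paper's: Theorem \ref{y} gives convergence in probability, uniform integrability of $\{|G_{n}|^{r}\}$ is verified, and Theorem 5.4 in Chapter 5 of \cite{17} upgrades this to $L_{r}$ convergence, which is exactly what the paper means by ``argued in a similar manner to Corollary \ref{f}.'' The only (welcome) difference is that you establish uniform integrability from the deterministic bound $0 \leq G_{n} \leq (2n+3)^{2}/(8(n+3)(n+2)) \leq 1$ --- valid because $3 \leq L_{n} \leq n+2$ lies between the roots $1$ and $2n+4$ of the numerator in (\ref{ilo}) --- rather than from the $L^{N}$-boundedness criterion (Theorem 4.2 in Chapter 5 of \cite{17}) invoked in Corollary \ref{f}, which makes the uniform integrability step immediate.
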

\begin{proof}
Argued in a similar manner to Corollary \ref{f} by Theorem \ref{y}, the result follows.
\end{proof}

\noindent

\subsection{Degree-based Hoover index} 
In \cite{15} the authors proposed a degree-based Hoover index for graphs analogous to the degree-based Gini index introduced in \cite{12} as a competing measure for assessing graph regularity. In our context, at time $n \geq 1$ the degree-based Hoover index of a graph  within the class of RSTs ($H_{n}$) is defined as follows:\\
\begin{center}
    $H_{n}=\displaystyle\frac{\displaystyle\sum_{i \in V_{n}}|(n+3)deg_{i}-2
(n+2)|}{4(n+2)(n+3)},$
\end{center}

\medskip
\noindent
where $V_{n}$ denotes the node set at time $n$. In a similar way, we take $\mathbb{E}(H_{n})$ as the degree-based Hoover index of the class. The same analysis applied in Section \ref{hu} is used in this section and we obtain the following results.

\begin{proposition}\label{T}
 For $n \geq 1$, we have
\begin{enumerate}

    \item  $H_{n}=\frac{(n+1)L_{n}}{2(n+3)(n+2)}.$
    \item $\mathbb{E}(H_{n})=\frac{
     pn^{2}+3n+3-p}{2(n+3)(n+2)}$.
     \item $\mathbb{V}(H_{n})=\frac{p(1-p)(n^{2}-1)(n+1)}{4(n+3)^{2}(n+2)^{2}}.$
     \item For all $r >0$, $H_{n}\xrightarrow{L_{r}} \frac{p}{2}$, as $n \rightarrow \infty$.
\end{enumerate}
\end{proposition}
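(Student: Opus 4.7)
The plan is to follow the template from Section \ref{hu} for the degree-based Gini index, since the degree structure is the same: at each stage the tree has one centroid of degree $L_n$, $L_n$ leaves of degree $1$, and $n+2-L_n$ internal nodes of degree $2$. I would first attack part (1) by splitting the numerator $\sum_{i\in V_n}|(n+3)\deg_i - 2(n+2)|$ across these three classes. The centroid contributes $(n+3)L_n - 2(n+2)$, which is non-negative because $L_n\geq 3$ forces $(n+3)L_n \geq 3(n+3) > 2(n+2)$. Each leaf contributes $|(n+3)-2(n+2)| = n+1$ and each internal node contributes $|2(n+3)-2(n+2)| = 2$. Summing with the appropriate multiplicities, the $\pm 2(n+2)$ constants cancel and what is left collects as $L_n[(n+3)+(n+1)-2] = 2(n+1)L_n$. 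Dividing by $4(n+2)(n+3)$ produces the claimed closed form.

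Once (1) is in place, parts (2) and (3) are immediate: $H_n$ is an affine multiple of $L_n$ with deterministic coefficient $(n+1)/[2(n+2)(n+3)]$, so substituting $\mathbb{E}(L_n)$ and $\mathbb{V}(L_n)$ from Proposition \ref{p1}(a) and simplifying algebraically yields the stated expressions. For (4), (2) gives $\mathbb{E}(H_n)\to p/2$ and (3) gives $\mathbb{V}(H_n)=O(1/n)\to 0$, so Chebyshev's inequality yields $H_n \xrightarrow{P} p/2$, mirroring the argument in Theorem \ref{y}. To upgrade to $L_r$-convergence for arbitrary $r>0$, I would invoke the deterministic bound $L_n\leq n+2$ coming from the construction (each step adds at most one leaf), which via (1) forces $H_n \leq (n+1)/[2(n+3)] \leq 1/2$. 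Hence $\{H_n^r\}_{n\geq 1}$ is uniformly bounded, therefore trivially uniformly integrable, and Theorem 5.4 in Chapter 5 of \cite{17} delivers the $L_r$-conclusion as was done in Corollaries \ref{f} and \ref{IQ}.

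The main obstacle is really the bookkeeping in part (1): one has to correctly exploit $L_n\geq 3$ to drop the absolute value on the centroid term, and then verify that the $2(n+2)$ constants cancel across classes so that the whole expression collapses to the tidy form $(n+1)L_n/[2(n+2)(n+3)]$. Once this identity is secured, the moment computations and the $L_r$-upgrade are essentially routine, the only nontrivial point being the deterministic upper bound $H_n\leq 1/2$ needed for uniform integrability in (4).
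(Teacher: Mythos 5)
Your proof is correct and follows essentially the route the paper intends (the paper gives no separate proof, remarking only that the same analysis as for the degree-based Gini index in Section \ref{hu} applies): the class-by-class decomposition of the sum, the use of $L_{n}\geq 3$ to drop the absolute value at the centroid, and the reduction of parts (2)--(4) to Proposition \ref{p1} all check out. Your uniform-integrability step via the deterministic bound $H_{n}\leq \frac{1}{2}$ is a slightly cleaner shortcut than the moment-based criterion the paper invokes in Corollaries \ref{f} and \ref{IQ}, but the argument is otherwise the same.
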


\noindent
A direct consequence of Proposition \ref{T} is the following corollary.
\begin{corollary}
In the preferential model for all $r >0$, it is verified that $H_{n} \xrightarrow{L_{r}}  \frac{1}{4}$ and $\mathbb{E}(H_{n}) \rightarrow \frac{1}{4}$ when $n$ goes to infinity.
\end{corollary}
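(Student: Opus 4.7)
The plan is to observe that the corollary is a direct specialization of Proposition \ref{T} to the parameter value $p=\frac{1}{2}$, since Section \ref{subsubsec2} establishes that the preferential model coincides exactly with the model of Section \ref{sec2} taken at $p=\frac{1}{2}$. Once this identification is made explicit, both conclusions follow by pure substitution, so there is essentially no analytic obstacle; the only care needed is to confirm that the convergence modes are preserved under fixing the parameter, which they are, because Proposition \ref{T} is stated for every $p\in(0,1)$.

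Concretely, I would first invoke the result of Section \ref{subsubsec2} to write $H_{n}^{\mathrm{pref}}$ as $H_{n}$ evaluated at $p=\tfrac{1}{2}$. Then I would apply Proposition \ref{T} part 4 with $p=\tfrac{1}{2}$, obtaining
\[
H_{n}\xrightarrow{L_{r}}\frac{p}{2}\bigg|_{p=1/2}=\frac{1}{4},
\]
for every $r>0$. For the convergence of the mean, I would substitute $p=\tfrac{1}{2}$ in Proposition \ref{T} part 2, giving
\[
\mathbb{E}(H_{n})=\frac{\tfrac{1}{2}n^{2}+3n+\tfrac{5}{2}}{2(n+3)(n+2)},
\]
and then let $n\to\infty$ to see that the leading term in the numerator is $\tfrac{1}{2}n^{2}$ while the denominator behaves like $2n^{2}$, so the ratio tends to $\tfrac{1}{4}$.

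The main (and only) obstacle is making sure the paper's own logical scaffolding is used transparently: I would cite Section \ref{subsubsec2} for the identification $p=\tfrac{1}{2}$ and Proposition \ref{T} parts 2 and 4 for the two convergences, and remark that convergence in $L_{r}$ for every $r>0$ automatically implies convergence of the means (taking $r=1$), so the second conclusion could alternatively be deduced from the first. I would keep the write-up to a couple of lines, since all the real work was carried out in Proposition \ref{T}.
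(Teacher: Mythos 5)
Your proposal is correct and matches the paper's own (implicit) argument: the corollary is stated as a direct consequence of Proposition \ref{T}, obtained by specializing to $p=\tfrac{1}{2}$ via the identification of the preferential model in Section \ref{subsubsec2}. Your substitutions are accurate, and your remark that the $L_{1}$ convergence already yields convergence of the means is a valid, if redundant, observation.
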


\section{Conclusion}\label{sec13}
We investigate a class of RSTs, the random
variable of prime interest is the number of leaves as time proceeds and we calculate the moment generating function of the leaves and show that the number of leaves follow a Gaussian  law asymptotically. Next, we investigate several useful topological  indices for this
class, including degree-based Gini index, degree-based Hoover index, generalized Zagreb index and other indices associated with these. Moreover, Proposition 3 and Theorem 1 showed in \cite{2} are deduced from Proposition \ref{p1} taking $p=1/2$.  In similar way, the results exposed in Section 3.2.1 and 3.2.2 of \cite{2} are obtained as a special case of the results demonstrated in Section \ref{pa} and \ref{hu}, respectively. 

\bibliographystyle{chicago}
\bibliography{Bib}

\end{document}